\documentclass{amsart}
\usepackage{amsfonts}

\setcounter{MaxMatrixCols}{10}

\newtheorem{theorem}{Theorem}
\theoremstyle{plain}

\newtheorem{definition}{Definition}

\numberwithin{equation}{section}

\begin{document}
\title[Optimal linear approximation]{Optimal linear approximation and isometric extensions}
\author{Alexander Kushpel}
\address{\c{C}ankaya University, Department of Mathematics\\
\qquad Ankara, Turkey}
\email{kushpel@cankaya.edu.tr}
\subjclass[2010]{ 41A46, 42A45}
\keywords{Optimal linear approximation, absolute widths, multiplier}

\begin{abstract}
Let $X$ be a Banach space with the unit ball $B(X)$ and $A\subset X$ be a
convex origin-symmetric compact in $X$. Let $\mathrm{j}:X\rightarrow 
\widetilde{X}$ be an isometric extension of $X$. It is well-known that
linear widths $\lambda _{n}\left( \mathrm{j}\left( A\right) \text{,}%
\widetilde{X}\right) $ may decrease in order when compared with $\lambda
_{n}\left( A\text{,}X\right) $ and absolute widths $\Lambda \left( A,%
\widehat{X}\right) =\inf_{\mathrm{j}}\left( \mathrm{j}\left( A\right) ,%
\widetilde{X}\right) $ are realized in the space $\widehat{X}$ which is the
Banach space of bounded functions $f:B\left( X^{\ast }\right) \rightarrow 
\mathbb{R}$ on the unit ball $B\left( X^{\ast }\right) $ of the conjugate
space $X^{\ast }$. We show that it is sufficient to use just $n$-dimensional
extensions of $X$ to attain absolute linear widths. This unexpected fact
significantly reduces the space $\ \widehat{X}$. This allows us to introduce
the notion of preabsolute widths. We give the respective optimal extensions
explicitly and establish order estimates for preabsolute widths of a wide
range of sets of smooth functions considered in \cite{C11}. In particular,
in the case of super-small and super-high smoothness considered in \cite{C11}
the orders of preabsolute linear widths coincide with the orders of absolute
linear widths. In the intermediate cases of finite and infinite smoothness
the respective orders are different.
\end{abstract}

\maketitle

\section{Introduction}

Optimal linear approximation and recovery play an important role in
Approximation Theory and they are connected via absolute linear widths and
duality with nonlinear approximation. $n$-Widths were introduced in 1936 by
Kolmogorov to compare the efficiency of numerical algorithms \cite{C2}. Let $%
\left( X,\left\Vert \cdot \right\Vert _{X}\right) $ be a Banach space with
the unit ball $B\left( X\right) $ and $A\subset X$ be a compact, convex and
origin symmetric set in $X$. The Kolmogorov $n$-width of $A$ in $X$ \ is
defined as%
\begin{equation*}
d_{n}\left( A,X\right) =\inf_{L_{n}\subset X}\sup_{x\in A}\inf_{y\in
L_{n}}\left\Vert x-y\right\Vert _{X}
\end{equation*}%
Let%
\begin{equation*}
d^{n}\left( A,X\right) =\inf_{L^{n}}\sup_{x\in A\cap L^{n}}\left\Vert
x\right\Vert _{X},
\end{equation*}%
be the Gelfand $n$-width \cite{C1}. Here $L^{n}$ runs over all subspaces of
codimension at most $n$. We shall concentrate here on linear widths
introduced in \cite{C3}. The linear $n$-width of $A$ in $X$ is defined by%
\begin{equation*}
\lambda _{n}\left( A,X\right) =\inf_{\mathrm{P}_{n}}\sup_{x\in A}\left\Vert
x-\mathrm{P}_{n}x\right\Vert _{X},
\end{equation*}%
where $\mathrm{P}_{n}:X\rightarrow X$ varies over all linear operators of
rank at most $n$. Let $X$ and $Z$ be Banach spaces, $u:X\rightarrow Z$, $%
u\in \mathcal{L}\left( X,Z\right) $ be a bounded linear operator and $%
u^{\ast }$ be its adjoint. It is well-known if $u$ is compact or $Z$ is
reflexive (see e.g. \cite{MAKOV}, \cite{KUSHP-0}) then 
\begin{equation}
d^{n}\left( u^{\ast }\right) =d_{n}\left( u\right)   \label{dl1}
\end{equation}%
and%
\begin{equation*}
\lambda _{n}\left( u\right) =\lambda _{n}\left( u^{\ast }\right) ,
\end{equation*}%
where%
\begin{equation*}
d_{n}\left( u\right) =d_{n}\left( uB\left( X\right) ,Z\right)
=\inf_{L_{n}\subset X}\sup_{x\in B\left( X\right) }\inf_{y\in
L_{n}}\left\Vert ux-y\right\Vert _{X},
\end{equation*}%
\begin{equation*}
\lambda _{n}\left( u\right) =\lambda _{n}\left( uB\left( X\right) ,Z\right)
=\inf_{\mathrm{P}_{n}}\sup_{x\in B\left( X\right) }\left\Vert ux-\mathrm{P}%
_{n}ux\right\Vert _{X}.
\end{equation*}

Let $\left( \widetilde{X},\mathrm{j}\right) $ be an extension of $X\subset 
\widetilde{X}$, where $\mathrm{j}:X\rightarrow \widetilde{X}$ is a linear
isometry. It was noticed by Kolmogorov and demonstrated on a concrete
example by Tikhomirov \cite{C3} that the linear $n$-width of $A$ in $X$ may
decrease in an isometric extension $\widetilde{X}$ of $X\subset $ $%
\widetilde{X}$, since $\widetilde{X}$ contains more subspaces to approximate 
$A$. Hence it is natural to consider 
\begin{equation*}
\Lambda _{n}\left( A,X\right) =\inf \lambda _{n}\left( \mathrm{j}\left(
A\right) ,\widetilde{X}\right) ,
\end{equation*}%
where $\inf $ is taken over all isometric extensions $\mathrm{j}%
:X\rightarrow \widetilde{X}$. The width $\Lambda _{n}\left( A,X\right) $ is
the absolute linear $n$-width introduced by Ismagilov \cite{C1}. It is known
that $\Lambda _{n}\left( A,X\right) =d^{n}\left( A,X\right) $. Moreover, the
absolute linear width is realized in the so-called universal isometric
extension $\widehat{X}$ which can be constructed as following. Let $B\left(
X^{\ast }\right) $ be the unit ball in the dual of $X$ and $\widehat{X}$ be
the Banach space of bounded functions $f:B\left( X^{\ast }\right)
\rightarrow \mathbb{R}$ with the usual norm 
\begin{equation*}
\left\Vert f\left( \phi \right) \right\Vert _{\widehat{X}}=\sup_{\phi \in
B\left( X^{\ast }\right) }\left\vert f\left( \phi \right) \right\vert .
\end{equation*}%
Clearly, $f\left( \phi \right) =\left\langle x,\phi \right\rangle \in 
\widehat{X}$ for any $x\in X$. By this way we get the linear isometric
extension $\mathrm{j}:X\rightarrow \widehat{X}$. Observe that Gelfand $n$%
-widths are closely connected to the linear cowidths. Let $\mathbb{R}^{n}$
be the coding set, i.e. the set which contains information on the elements
of $A$ and $\mathcal{L}\left( \mathrm{lin}\left( A\right) ,\mathbb{R}%
^{n}\right) $ be a family of coding operators, $\phi :A\rightarrow \mathbb{R}%
^{n}$. Let $D\subset X$,%
\begin{equation*}
\mathrm{diam}\left( D,X\right) =\sup \left\{ \left\Vert x-y\right\Vert
_{X}\left\vert x,y\in D\right. \right\}
\end{equation*}%
and 
\begin{equation*}
\phi ^{-1}\left( z\right) =\left\{ y\left\vert y\in X\text{, }\phi \left(
y\right) =z\right. \right\}
\end{equation*}%
be the diameter of $D$ in $X$ and preimage of $z\in X$ respectively. The
linear cowidth is defined as%
\begin{equation*}
\lambda ^{n}\left( A,X\right) =\inf_{\phi \in \mathcal{L}\left( \mathrm{lin}%
\left( A\right) ,\mathbb{R}^{n}\right) }\sup_{x\in A}\mathrm{diam}\left\{
\phi ^{-1}\left( \phi \left( x\right) \right) \right\} .
\end{equation*}%
Clearly, 
\begin{equation*}
\lambda ^{n}\left( A,X\right) =2d^{n}\left( A,X\right) .
\end{equation*}%
In Section \ref{Sec:results} we demonstrate an unexpected phenomenon.
Namely, we show that instead of a considerably big extension $\widehat{X}$
of $X$ it is sufficient to use $n$-dimensional extensions constructed in
Theorem \ref{THEOREM 1} to attain absolute $n$-widths $\Lambda _{n}\left(
A,X\right) $. Hence it is natural to introduce a new notion of preabsolute $%
n $-widths, $\Lambda _{n,m}\left( A,X\right) $ (see Definition \ref{DEF 1}),
where we allow $m$-dimensional isometric extensions of $X$, $0\leq m\leq n$.
\ In Section \ref{Sec:examples} we present two-side estimates for
preabsolute $n$-widths $\Lambda _{n,m}\left( A,X\right) $ on a wide range of
sets of smooth functions considered in \cite{C11}. More precisely, denote by 
$\mathcal{T}_{n}$, $n\in \mathbb{N}$ the sequence of subspaces of
trigonometric polynomials with the usual order, i.e. $\mathcal{T}_{n}=%
\mathrm{lin}\left\{ 1,\cos kx,\sin kx\text{, }k\in \mathbb{N}\right\} $.
Consider usual spaces $L_{p}$, $1\leq p\leq \infty $, of $p$-integrable
functions $\phi $ on the unit circle $\mathbb{T}$ with the Lebesgue measure $%
dx$, 
\begin{equation*}
\left\Vert \phi \right\Vert _{p}=\left( \int_{\mathbb{T}}\left\vert \phi
\right\vert ^{p}dx\right) ^{\frac{1}{p}}<\infty \text{.}
\end{equation*}%
Let $\phi \in L_{p}$ with the formal Fourier series%
\begin{equation*}
\phi \sim \sum_{k=1}^{\infty }a_{k}\left( \phi \right) \cos kx+b_{k}\left(
\phi \right) \sin kx,
\end{equation*}%
where%
\begin{equation*}
a_{k}\left( \phi \right) =\frac{1}{\pi }\int_{-\pi }^{\pi }\phi \left(
t\right) \cos ktdt,
\end{equation*}%
\begin{equation*}
b_{k}\left( \phi \right) =\frac{1}{\pi }\int_{-\pi }^{\pi }\phi \left(
t\right) \sin ktdt
\end{equation*}%
and%
\begin{equation}
S_{n}\left( \phi ,x\right) =\sum_{k=1}^{n}a_{k}\left( \phi \right) \cos
kx+b_{k}\left( \phi \right) \sin kx  \label{sum}
\end{equation}%
be its $n^{\frac{th}{{}}}$ Fourier sum. We introduce sets of smooth
functions using multipliers $\Lambda =\left\{ \lambda \left( k\right) \text{%
, }k\in \mathbb{N}\right\} $ \cite{C11}. We say that $f\in \Lambda _{\beta
}U_{p}$ if%
\begin{equation}
f\sim \sum_{k=1}^{\infty }\lambda \left( k\right) \left( a_{k}\left( \phi
\right) \cos \left( kx-\frac{\beta \pi }{2}\right) +b_{k}\left( \phi \right)
\sin \left( kx-\frac{\beta \pi }{2}\right) \right) ,  \label{0}
\end{equation}%
where $\phi \in U_{p}=\left\{ \phi \left\vert \left\Vert \phi \right\Vert
_{p}\leq 1\right. \right\} $ is the unit ball in $L_{p}$. If $\beta =0$ then
we write $\Lambda _{\beta }=\Lambda $. If there exists $K\in L_{1}$ such
that 
\begin{equation*}
K\sim \sum_{k=1}^{\infty }\lambda \left( k\right) \cos \left( kx-\frac{\beta
\pi }{2}\right)
\end{equation*}%
then $\Lambda _{\beta }U_{p}$ is the set of functions $f$ \ representable in
the form%
\begin{equation*}
f\left( x\right) =\int_{\mathbb{T}}K\left( x-y\right) \phi \left( y\right)
dy,
\end{equation*}%
i.e. in this case $\Lambda _{\beta }U_{p}=K\ast U_{p}$. Observe that the
smoothness of the classes $\Lambda _{\beta }U_{p}$ is determined by the rate
of decay of the sequence $\Lambda $. In particular, if $\lambda \left(
k\right) =k^{-r}$, $\beta =r$, $r>0$ we get standard Sobolev classes $%
W_{p}^{r}$. If $\lambda \left( k\right) =\exp \left( -\mu k^{\gamma }\right) 
$, $\beta \in \mathbb{R}$, $\mu >0$, $0<\gamma <1$, then the set $\Lambda
_{\beta }U_{p}$ consists of infinitely differentiable functions. In the case 
$\gamma =1$ we get classes of analytic functions. If $\gamma >1$, then we
obtain classes of entire functions. To simplify technical notations we
present our results just in the case $1<p,q<\infty $ and $\beta =0$.

We show here that isometric extensions may decrease the order of linear widths
of $\Lambda U_{p}$ in $L_{p}$ if $1<p<q \leq 2$
in the case of small and finite smoothness, i.e. if%
\begin{equation*}
\lambda \left( k\right) =k^{-r}\text{, }r>\left( \frac{1}{p}-\frac{1}{q}%
\right) _{+},
\end{equation*}%
where $\left( a\right) _{+}=\max \left\{ a,0\right\} $, $a\in \mathbb{R}$.
From the other side, it is shown that in the case of super-small smoothness,
i.e. if 
\begin{equation*}
\lambda \left( k\right) =\phi \left( k\right) k^{-\left( \frac{1}{p}-\frac{1%
}{q}\right) _{+}},
\end{equation*}%
where $\phi \left( k\right) $ is a decreasing function, $\lim_{k\rightarrow
\infty }\phi \left( k\right) =0$ and $\phi \left( k^{s}\right) \asymp \phi
\left( k\right) $ for any fixed $s\in \mathbb{N}$, isometric extensions can
not decrease the order of preabsolute linear widths $\Lambda _{n,m}\left(
\Lambda U_{p},L_{q}\right) $, $0\leq m\leq n$. A typical example of the sequence $\lambda \left(
k\right) $ is given by%
\begin{equation*}
\lambda \left( k\right) =\left( \ln \left( k+1\right) \right) ^{-\varrho
}k^{-\left( \frac{1}{p}-\frac{1}{q}\right) _{+}}\text{, }\varrho >0,k\in 
\mathbb{N}.
\end{equation*}

Similarly, in the case of super-high smoothness, i.e. if 
\begin{equation*}
\lambda \left( k\right) =\exp \left( -\mu n^{\gamma }\right) \text{, }\mu >0%
\text{, }\gamma \geq 1
\end{equation*}%
the order of preabsolute linear widths $\Lambda _{n,m}\left( \Lambda
U_{p},L_{q}\right) $, $\left( p,q\right) \in I$, $0\leq m\leq n$ remains the
same as $\lambda _{n}\left( \Lambda U_{p},L_{q}\right) $ (see (\ref{is})).
In this sense, the results presented here complement the results obtained in 
\cite{C11}.

For easy of notation we will put $a_{n}\gg b_{n}$ for two sequences, if $%
a_{n}>Cb_{n}$ for some $C>0$ and any $n\in \mathbb{N}$ and $a_{n}\asymp
b_{n} $ if $C_{1}b_{n}\leq a_{n}\leq C_{2}b_{n}$ for all $n\in \mathbb{N}$
and some constants $C_{1}>0$ and $C_{2}>0$.

\section{Preabsolute linear widths}

\label{Sec:results}

Our main result significantly reduces the space $\widehat{X}$ \ and gives an
explicit representation of the extension $\mathrm{j}$ which is important for
applications. Consider Banach space $\mathrm{lin}\left( A\right) $ with the
unit ball $A$ and $\left( \mathrm{lin}\left( A\right) \right) ^{\ast }$ its
conjugate with the usual norm $\left\Vert \cdot \right\Vert _{\left( \mathrm{%
lin}\left( A\right) \right) ^{\ast }}$.

\begin{theorem}
\label{THEOREM 1} Let $A\subset X$ be a convex origin symmetric compact, $%
\mathrm{diam}\left( A,X\right) <\infty $, $\phi _{k}\in X^{\ast }$ be such
that%
\begin{equation*}
\sup \left\{ \left\Vert x\right\Vert _{X}\left\vert x\in A\text{, }%
\left\langle x,\phi _{k}\right\rangle =0\text{, }1\leq k\leq n\right.
\right\} \leq d^{n}\left( A,X\right) +\epsilon \text{, }\forall \epsilon >0
\end{equation*}%
and $c_{k}:B\left( X^{\ast }\right) \rightarrow \mathbb{R}$, $1\leq k\leq n$
be the functionals of the best approximation of $\phi \in B\left( X^{\ast
}\right) $ by $\mathrm{lin}\left\{ \phi _{k}\text{, }1\leq k\leq n\right\} $
in $\left\Vert \cdot \right\Vert _{\left( \mathrm{lin}\left( A\right)
\right) ^{\ast }}$. Let%
\begin{equation*}
\left[ c_{k}\right] =\left\{ 
\begin{array}{cc}
0, & c_{k}\in X^{\ast \ast }, \\ 
c_{k}, & c_{k}\notin X^{\ast \ast }.%
\end{array}%
\right. 
\end{equation*}%
Then%
\begin{equation*}
\lambda _{n}\left( \mathrm{j}\left( A\right) ,\overline{X}\right) =\Lambda
_{n}\left( A,X\right) ,
\end{equation*}%
where $\mathrm{j}:X\rightarrow \overline{X}=\mathrm{lin}\left\{ X^{\ast \ast
},\left[ c_{1}\right] ,\cdots ,\left[ c_{n}\right] \right\} \subset \widehat{%
X}$.
\end{theorem}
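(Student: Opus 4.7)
The plan is to establish both inequalities in $\lambda_n(\mathrm{j}(A), \overline{X}) = \Lambda_n(A, X) = d^n(A, X)$. The lower bound $\lambda_n(\mathrm{j}(A), \overline{X}) \geq \Lambda_n(A, X)$ is immediate, because the map $\mathrm{j}$ realizes $\overline{X}$ as one of the isometric extensions in the infimum defining $\Lambda_n$: the canonical embedding $X \hookrightarrow X^{\ast\ast} \hookrightarrow \widehat{X}$ is isometric, and $\overline{X}$ is sandwiched between $X^{\ast\ast}$ and $\widehat{X}$.

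For the upper bound I would write down a concrete rank-$n$ operator
$$(\mathrm{P}_{n} f)(\phi) = \sum_{k=1}^{n} f(\phi_{k}) \, c_{k}(\phi), \qquad f \in \overline{X},\ \phi \in B(X^{\ast}),$$
whose image lies in $\mathrm{lin}\{c_{1}, \ldots, c_{n}\}$. The definition of $\overline{X}$ is engineered precisely so this span is contained in $\overline{X}$: each $c_{k}$ is either already in $X^{\ast\ast}$ or has been adjoined as $[c_{k}]$. Boundedness of the $c_{k}$, hence of $\mathrm{P}_{n}$, should follow from $\mathrm{diam}(A, X) < \infty$ together with a measurable selection of the best approximation coefficients in $(\mathrm{lin}(A))^{\ast}$. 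For $x \in A$, using $\mathrm{j}(x)(\phi) = \langle x, \phi \rangle$ and $\|x\|_{\mathrm{lin}(A)} \leq 1$, a direct computation gives
$$\|\mathrm{j}(x) - \mathrm{P}_{n} \mathrm{j}(x)\|_{\overline{X}} = \sup_{\phi \in B(X^{\ast})} \bigl|\langle x, \phi - \textstyle\sum_{k} c_{k}(\phi) \phi_{k} \rangle\bigr| \leq \sup_{\phi \in B(X^{\ast})} \mathrm{dist}_{(\mathrm{lin}(A))^{\ast}}\bigl(\phi, \mathrm{lin}\{\phi_{1}, \ldots, \phi_{n}\}\bigr).$$

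The heart of the argument is the duality identity, with $V = \mathrm{lin}\{\phi_{k}\}$,
$$\sup_{\phi \in B(X^{\ast})} \mathrm{dist}_{(\mathrm{lin}(A))^{\ast}}(\phi, V) = \sup\{\|x\|_{X} : x \in A,\ \langle x, \phi_{k} \rangle = 0,\ 1 \leq k \leq n\}.$$
I would prove this via Sion's minimax applied to the bilinear form $(x, v) \mapsto \langle x, \phi - v \rangle$ on the compact-convex set $A$ and the subspace $V$ (exploiting origin-symmetry of $A$): the inner $\inf_{v}$ equals $\langle x, \phi\rangle$ on $A \cap V^{\perp}$ and $-\infty$ elsewhere, while swapping order and invoking Hahn--Banach identifies the outer $\sup_{\phi}$ with $\|x\|_{X}$. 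This is exactly the duality principle underlying (\ref{dl1}) and the identity $\Lambda_{n} = d^{n}$. The hypothesis on the $\phi_{k}$ then bounds the right-hand side by $d^{n}(A, X) + \epsilon$, giving the matching upper bound (and equality upon letting $\epsilon \to 0$ through a refined choice of $\phi_{k}$, hence of $\overline{X}$).

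The main obstacle I anticipate is the duality identity above: justifying the minimax exchange in a general Banach-space setting and verifying that the restriction map $X^{\ast} \to (\mathrm{lin}(A))^{\ast}$ interacts correctly with the supremum over $B(X^{\ast})$. A secondary technicality is ensuring $\mathrm{P}_{n}$ is a genuine bounded rank-$n$ operator rather than a merely linear map defined pointwise; this reduces to boundedness of $\phi \mapsto c_{k}(\phi)$ on $B(X^{\ast})$, which should come from linear independence of $\{\phi_{k}\}$ in $(\mathrm{lin}(A))^{\ast}$ together with finiteness of $\mathrm{diam}(A, X)$.
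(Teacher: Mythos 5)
Your proposal is correct and follows essentially the same route as the paper: $\epsilon$-optimal Gelfand functionals $\phi_k$, bounded best-approximation coefficients $c_k(\phi)$ on $B(X^{\ast})$, the duality identity relating $\sup_{\phi}\mathrm{dist}_{(\mathrm{lin}(A))^{\ast}}(\phi,\mathrm{lin}\{\phi_k\})$ to the Gelfand quantity, and the rank-$n$ operator $f\mapsto\sum_k f(\phi_k)c_k$ with range in $\mathrm{lin}\{c_1,\dots,c_n\}\subset\overline{X}$. The only cosmetic differences are that you obtain the lower bound from the trivial inclusion of $\overline{X}$ among all isometric extensions together with the known identity $\Lambda_n=d^n$, whereas the paper rederives $\Lambda_n\geq d^n$ inline, and you invoke Sion's minimax where the paper simply appeals to duality.
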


\begin{proof}
By the definition of linear width, for any extension $\mathrm{j}%
:X\rightarrow \widetilde{X}$ and $\epsilon >0$ there exist $\phi _{k}\in $ $%
\widetilde{X}^{\ast }$and $x_{k}\in \widetilde{X}$, $1\leq k\leq n$ such
that 
\begin{equation*}
\sup_{x\in A}\left\Vert \mathrm{j}\left( x\right)
-\sum_{k=1}^{n}\left\langle \mathrm{j}\left( x\right) ,\phi
_{k}\right\rangle x_{k}\right\Vert _{\widetilde{X}}\leq \lambda _{n}\left( 
\mathrm{j}\left( A\right) ,\widetilde{X}\right) +\epsilon .
\end{equation*}%
Consequently, by the definition of Gelfand widths (see \cite{C1}),%
\begin{equation*}
d^{n}\left( A,X\right) \leq \sup \left\{ \left\Vert \mathrm{j}\left(
x\right) \right\Vert _{\widetilde{X}}\left\vert x\in A\text{, }\left\langle 
\mathrm{j}\left( x\right) ,\phi _{k}\right\rangle =0\text{, }1\leq k\leq
n\right. \right\}
\end{equation*}%
\begin{equation}
\leq \lambda _{n}\left( \mathrm{j}\left( A\right) ,\widetilde{X}\right)
+\epsilon .  \label{0000}
\end{equation}%
Also, by the definition of Gelfand width there are such $\phi _{k}\in
X^{\ast }$, $1\leq k\leq n$ that 
\begin{equation}
\sup \left\{ \left\Vert x\right\Vert _{X}\left\vert x\in A\text{, }%
\left\langle x,\phi _{k}\right\rangle =0\text{, }1\leq k\leq n\right.
\right\} \leq d^{n}\left( A,X\right) +\epsilon  \label{111}
\end{equation}%
for any $\epsilon >0$. Let $\mathrm{lin}\left( A\right) $ be the Banach
space with the unit ball $A$ and $\left( \mathrm{lin}\left( A\right) \right)
^{\ast }$ be its conjugate with the usual norm%
\begin{equation*}
\left\Vert \phi \right\Vert _{\left( \mathrm{lin}\left( A\right) \right)
^{\ast }}=\sup \left\{ \left\vert \left\langle x,\phi \right\rangle
\right\vert \left\vert x\in A\right. \text{ }\right\} .
\end{equation*}%
Then, $\forall \phi \in B\left( X^{\ast }\right) $, by duality and (\ref{111}%
) we get 
\begin{equation*}
\inf \left\{ \left\Vert \phi -\sum_{k=1}^{n}c_{k}\left( \phi \right) \phi
_{k}\right\Vert _{\left( \mathrm{lin}\left( A\right) \right) ^{\ast
}}\left\vert c_{k}\text{, }1\leq k\leq n\right. \right\}
\end{equation*}%
\begin{equation*}
\leq \sup \left\{ \left\vert \left\langle x,\phi \right\rangle \right\vert
\left\vert x\in A\text{, }\left\langle x,\phi _{k}\right\rangle =0\text{, }%
1\leq k\leq n\right. \text{ }\right\}
\end{equation*}%
\begin{equation*}
\leq d^{n}\left( A,X\right) +\epsilon .
\end{equation*}%
Since $\mathrm{diam}\left( A,X\right) <\infty $ then there are such bounded
functions $\phi \mapsto c_{k}\left( \phi \right) $, $1\leq k\leq n$, $\phi
\in B\left( X^{\ast }\right) $ that 
\begin{equation*}
\sup \left\{ \left\Vert \phi -\sum_{k=1}^{n}c_{k}\left( \phi \right) \phi
_{k}\right\Vert _{\left( \mathrm{lin}\left( A\right) \right) ^{\ast
}}\left\vert \phi \in B\left( X^{\ast }\right) \right. \right\} \leq
d^{n}\left( A,X\right) +\epsilon ,
\end{equation*}%
or%
\begin{equation*}
\sup_{\phi \in B\left( X^{\ast }\right) }\sup_{x\in A}\left\vert
\left\langle x,\phi \right\rangle -\sum_{k=1}^{n}c_{k}\left( \phi \right)
\left\langle x,\phi _{k}\right\rangle \right\vert
\end{equation*}%
\begin{equation*}
=\sup_{x\in A}\sup_{\phi \in B\left( X^{\ast }\right) }\left\vert
\left\langle x,\phi \right\rangle -\sum_{k=1}^{n}c_{k}\left( \phi \right)
\left\langle x,\phi _{k}\right\rangle \right\vert
\end{equation*}%
\begin{equation}
=\sup_{x\in A}\left\Vert \mathrm{j}\left( x\right)
-\sum_{k=1}^{n}\left\langle x,\phi _{k}\right\rangle c_{k}\right\Vert _{%
\widehat{X}}\leq d^{n}\left( A,X\right) +\epsilon .  \label{2222}
\end{equation}%
Since $\phi \in B\left( X^{\ast }\right) $ then $\left\langle x,\phi
\right\rangle \in X^{\ast \ast }$ and $c_{k}\in \widehat{X}$. Consequently, 
\begin{equation*}
\left\langle x,\phi \right\rangle -\sum_{k=1}^{n}\left\langle x,\phi
_{k}\right\rangle c_{k}\in \mathrm{lin}\left\{ X^{\ast \ast }\text{, }c_{k}%
\text{, }1\leq k\leq n\right\}
\end{equation*}%
\begin{equation*}
=\mathrm{lin}\left\{ X^{\ast \ast }\text{, }\left[ c_{k}\right] \text{, }%
1\leq k\leq n\right\} ,
\end{equation*}%
where $c_{k}$, $1\leq k\leq n$ are defined by (\ref{2222}). Observe that if
among $c_{k}$, $1\leq k\leq n$ there are linear functionals $c_{s}$ on $%
X^{\ast }$ then $c_{s}\in X^{\ast \ast }$, or $\left[ c_{s}\right] =0$.
Comparing (\ref{0000}) and (\ref{2222}) we get the proof.
\end{proof}

Theorem \ref{THEOREM 1} allows us to introduce the following notion.

\begin{definition}
\label{DEF 1} Let $\left( X,\left\Vert \cdot \right\Vert _{X}\right) $ be a
Banach space and $A\subset X$ be a compact, convex and origin symmetric set
in $X$. The $m$-preabsolute linear $n$-width of $A$ in $X$ is defined by%
\begin{equation*}
\Lambda _{n,m}\left( A,X\right) =\inf \lambda _{n}\left( \mathrm{j}%
_{m}\left( A\right) ,\widetilde{X}\right) \text{, }0\leq m\leq n,
\end{equation*}%
where $\inf $ is taken over all isometric extensions 
\begin{equation*}
\mathrm{j}_{m}:X\rightarrow \widetilde{X}=\mathrm{lin}\left\{ X,c_{k}\text{, 
}k\leq m\text{, }c_{0}=0\right\}
\end{equation*}%
and%
\begin{equation*}
c_{k}:B\left( X^{\ast }\right) \rightarrow \mathbb{R}\text{, }k\leq m\text{. 
}
\end{equation*}
\end{definition}

Observe that, by Theorem \ref{THEOREM 1}, 
\begin{equation*}
\lambda _{n}\left( A,X\right) =\Lambda _{n,0}\left( A,X\right) \geq \Lambda
_{n,1}\left( A,X\right) \geq \cdots \geq \Lambda _{n,n}\left( A,X\right)
\end{equation*}%
\begin{equation}
=\Lambda _{n}\left( A,X\right) =d^{n}\left( A,X\right) .  \label{ooo}
\end{equation}

\section{Examples and application}

\label{Sec:examples}

In this section we consider several motivating examples in the case $%
1<p<q\leq 2$ to underline the dependence of preabsolute widths on
smoothness. In particular, it is shown that in the cases of super-small and
super-high smoothness isometric extensions can not decrease the order of
linear widths.

\begin{theorem}
\bigskip \label{THEOREM 2} 1. Let $1<p<q\leq 2$ and 
\begin{equation*}
\lambda \left( k\right) =\varphi \left( k\right) k^{-\left( \frac{1}{p}-%
\frac{1}{q}\right) _{+}},k\in \mathbb{N}
\end{equation*}%
in (\ref{0}), where $\left( a\right) _{+}=\max \left\{ a,0\right\} $, $%
\varphi \left( k\right) $ is a decreasing function, $\lim_{k\rightarrow
\infty }\varphi \left( k\right) =0$ and $\varphi \left( k^{s}\right) \asymp
\varphi \left( k\right) $ for any fixed $s>0$ (i.e. the case of super-small
smoothness). Then%
\begin{equation}
\Lambda _{n,m}\left( \Lambda U_{p},L_{q}\right) \asymp \varphi \left(
n\right) \text{, }1<p,q<\infty \text{, }0\leq m\leq n.  \label{sss}
\end{equation}%
2. If $\lambda \left( k\right) =k^{-r}$ where%
\begin{equation*}
\frac{1}{p}-\frac{1}{q}<r<\frac{1}{2}\left( \frac{1}{p}-\frac{1}{q}\right)
/\left( \frac{1}{p}-\frac{1}{2}\right) \text{, }1<p<q\leq 2
\end{equation*}%
(i.e. the case of small smoothness) then%
\begin{equation}
n^{\frac{p}{2\left( p-1\right) }\left( -r+\frac{1}{p}-\frac{1}{q}\right)
}\ll \Lambda _{n,m}\left( W_{p}^{r},L_{q}\right) \ll n^{-r+\frac{1}{p}-\frac{%
1}{q}}\text{, }0\leq m\leq n.  \label{ss}
\end{equation}

3. If $\lambda \left( k\right) =k^{-r}$ where%
\begin{equation*}
r>\frac{1}{2}\left( \frac{1}{p}-\frac{1}{q}\right) /\left( \frac{1}{p}-\frac{%
1}{2}\right)
\end{equation*}
(i.e. the case of finite smoothness) then%
\begin{equation}
n^{-r}\ll \Lambda _{n,m}\left( W_{p}^{r},L_{q}\right) \ll n^{-r+\frac{1}{p}-%
\frac{1}{q}}\text{, }0\leq m\leq n\text{.}  \label{fs}
\end{equation}%
4. Let $\lambda \left( k\right) =\exp \left( \mu k^{\gamma }\right) $, $\mu
>0$, $0<\gamma <1$ in (\ref{0}) (i.e. the case of infinite smoothness) then%
\begin{equation}
\exp \left( -\mu n^{\gamma }\right) \ll \Lambda _{n,m}\left( \Lambda
U_{p},L_{q}\right) \ll \exp \left( -\mu n^{\gamma }\right) n^{\left(
1-\gamma \right) \left( \frac{1}{p}-\frac{1}{q}\right) }\text{, }0\leq m\leq
n.  \label{is}
\end{equation}%
5. Let $\lambda \left( k\right) =\exp \left( \mu k^{\gamma }\right) $, $\mu
>0$, $\gamma \geq 1$ in (\ref{0}) (i.e. the case of super-high smoothness)
then 
\begin{equation}
\Lambda _{n,m}\left( \Lambda U_{p},L_{q}\right) \asymp \exp \left( -\mu
n^{\gamma }\right) \text{, }\mu >0\text{, }\gamma \geq 1\text{, }0\leq m\leq
n.  \label{shs}
\end{equation}
\end{theorem}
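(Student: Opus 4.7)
The plan is to sandwich the preabsolute widths between the ordinary linear width and the Gelfand width using the chain (\ref{ooo}) produced by Theorem \ref{THEOREM 1}, namely
\begin{equation*}
\lambda _{n}\left( \Lambda U_{p},L_{q}\right) \geq \Lambda _{n,m}\left( \Lambda U_{p},L_{q}\right) \geq d^{n}\left( \Lambda U_{p},L_{q}\right) ,\quad 0\leq m\leq n.
\end{equation*}
With this sandwich in place, the theorem becomes a comparison problem: I have to match classical two-side estimates for $\lambda _{n}$ (from above) and $d^{n}$ (from below) of the $\Lambda $-classes. All five cases then follow by substituting the specific sequence $\lambda (k)$ into the bounds.

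For the upper bound I would use the Fourier partial sum $S_{n}$ in (\ref{sum}), which is a linear operator of rank $\leq 2n+1$, so $\lambda _{n}(\Lambda U_{p},L_{q})\leq \sup_{f\in \Lambda U_{p}}\|f-S_{n}f\|_{q}$. Because we are in the range $1<p<q\leq 2$, Hausdorff--Young together with the monotonicity of $\lambda (k)$ controls the high-frequency tail by $\lambda (n)\,n^{1/p-1/q}$ up to constants, giving $\lambda _{n}(\Lambda U_{p},L_{q})\ll \lambda (n)n^{1/p-1/q}$ for super-small, small, finite and infinite smoothness; for super-high smoothness the extra polynomial factor is absorbed into the exponential. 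This produces the upper bounds in (\ref{sss})--(\ref{shs}).

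For the lower bound I would realise $d^{n}$ through a finite-dimensional embedding. Fix $N\asymp n$ and, using the monotonicity of $\lambda $, observe that the multiplier $\Lambda $ acts on $\mathcal{T}_{N}$ as a diagonal operator whose smallest singular value is $\lambda (N)$; hence $\lambda (N)\,B(\mathcal{T}_{N}\cap L_{p})\subset \Lambda U_{p}$ and
\begin{equation*}
d^{n}(\Lambda U_{p},L_{q})\;\gg \;\lambda (N)\,d^{n}\bigl(B(\mathcal{T}_{N}\cap L_{p}),L_{q}\bigr).
\end{equation*}
The finite-dimensional Gelfand widths on the right are the classical Kashin--Gluskin type quantities for $B(\ell _{p}^{N})$ in $\ell _{q}^{N}$; choosing $N$ optimally in each regime produces the lower bounds written in (\ref{sss})--(\ref{shs}). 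In the super-small case, the hypothesis $\varphi (k^{s})\asymp \varphi (k)$ is exactly what allows me to absorb the power $N^{s}$ produced by the polynomial factors into $\varphi (n)$, making the lower and upper bound coincide; in the super-high case $\gamma \geq 1$, the exponential decay of $\exp(-\mu N^{\gamma })$ dominates every polynomial factor $N^{1/p-1/q}$ for any $N\asymp n$, again matching the two sides.

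The main technical obstacle is the lower bound for $d^{n}$. The upper bound is routine (Fourier projection plus Hausdorff--Young), and the ``consistency'' conditions $\varphi (k^{s})\asymp \varphi (k)$ in Case~1 and $\gamma \geq 1$ in Case~5 are precisely what is needed to make the two sides agree; the mismatch between the bounds in Cases~2--4 is genuine and reflects the known gap between Gelfand and linear widths of $W_{p}^{r}$ in the range $1<p<q\leq 2$. Once the classical widths of finite-dimensional $\ell _{p}^{N}$-balls in $\ell _{q}^{N}$ are invoked (as in the references used in \cite{C11}), the estimates collapse to elementary manipulations with the chosen $N\asymp n$.
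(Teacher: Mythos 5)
Your overall strategy coincides with the paper's: both proofs rest entirely on the sandwich $\lambda _{n}\left( \Lambda U_{p},L_{q}\right) =\Lambda _{n,0}\geq \Lambda _{n,m}\geq \Lambda _{n,n}=d^{n}\left( \Lambda U_{p},L_{q}\right) $ from (\ref{ooo}), with the upper bound supplied by the Fourier projection $S_{n}$ and the lower bound by an estimate of the Gelfand width, so the theorem reduces in both cases to quoting classical two-sided width estimates. Where you diverge is in how the Gelfand width is bounded from below. The paper invokes the duality relation (\ref{dl1}) to write $d^{n}\left( W_{p}^{r},L_{q}\right) =d_{n}\left( W_{q^{\prime }}^{r},L_{p^{\prime }}\right) $ with $2\leq q^{\prime }<p^{\prime }<\infty $ and then cites the known orders of Kolmogorov widths in that regime (\cite{GALEEV}, \cite{C23}, \cite{C11}); you instead propose a direct discretization, embedding a dilated ball of $\mathcal{T}_{N}$ into $\Lambda U_{p}$ and reducing to Gelfand widths of $\ell _{p}^{N}$-balls in $\ell _{q}^{N}$. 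The two routes meet at the same finite-dimensional Kashin--Gluskin estimates (they are related by exactly the duality the paper uses), but the paper's version is shorter because the Kolmogorov-width results can be cited off the shelf. Your version carries two steps that need more care than your sketch gives them. First, for $p\neq 2$ the inclusion $c\,\lambda \left( N\right) B\left( \mathcal{T}_{N}\cap L_{p}\right) \subset \Lambda U_{p}$ is not a statement about singular values but a uniform bound on the $L_{p}$-multiplier norm of the sequence $\lambda \left( N\right) /\lambda \left( k\right) $, $k\leq N$; this does hold for the monotone sequences considered here (Abel summation together with the Marcinkiewicz multiplier theorem), but it must be stated. Second, ``choosing $N$ optimally in each regime'' is precisely where the exponents in (\ref{ss}) and (\ref{fs}) come from and where the small/finite smoothness dichotomy arises, so it cannot be left implicit; moreover in the super-high case $\gamma \geq 1$ the multiplicative constant in $N\asymp n$ changes the exponent in $\exp \left( -\mu N^{\gamma }\right) $ itself, so $N$ must be pinned down exactly there (a point on which the paper is also somewhat loose, comparing $d^{2n}$ with $\exp \left( -\mu n^{\gamma }\right) $). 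With these two points supplied, your argument is a valid, essentially self-contained alternative to the paper's citation-based lower bound.
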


\begin{proof}
Let us consider the case of super-small smoothness (\ref{sss}).
It was
shown in \cite{C11} that in this case
\begin{equation*}
d^{n}\left( \Lambda U_{p},L_{q}\right) \asymp \lambda _{n}\left( \Lambda
U_{p},L_{q}\right) 
\end{equation*}

\begin{equation*}
\asymp \sup_{f\in \Lambda U_{p}}\left\Vert f-S_{n}\left( f\right)
\right\Vert _{q}\asymp \varphi \left( n\right) \text{.}
\end{equation*}%
Consequently, in this case 
\begin{equation}
\Lambda _{n,m}\left( \Lambda U_{p},L_{q}\right) \asymp \varphi \left(
n\right) \text{, }1<p,q<\infty 
\end{equation}%
for any $0\leq m\leq n$.

Let $\Lambda U_{p}=W_{p}^{r}$ be Sobolev class. In this case $\lambda \left(
k\right) =k^{-r}$. We show (\ref{fs}). It is known \cite{C11} that%
\begin{equation}
\lambda _{n}\left( W_{p}^{r},L_{q}\right) \ll \sup_{f\in
W_{p}^{r}}\left\Vert f-S_{n}\left( f\right) \right\Vert _{q}\ll n^{-r+\frac{1%
}{p}-\frac{1}{q}}\text{, }1<p<q\leq 2\text{, }r>\frac{1}{p}-\frac{1}{q},
\label{upper}
\end{equation}%
where $S_{n}$ is defined by (\ref{sum}).
By the Theorem \ref{THEOREM 2}, (\ref{ooo}) and (\ref{dl1}) for any $0\leq
m\leq n$ we have%
\begin{equation*}
\Lambda _{n,m}\left( W_{p}^{r},L_{q}\right) \geq \Lambda _{n,n}\left(
W_{p}^{r},L_{q}\right) =\Lambda _{n}\left( W_{p}^{r},L_{q}\right) 
\end{equation*}%
\begin{equation*}
=d^{n}\left( W_{p}^{r},L_{q}\right) =d_{n}\left( W_{q^{^{\prime
}}}^{r},L_{p^{^{\prime }}}\right) ,
\end{equation*}%
where 
\begin{equation*}
p^{^{\prime }}=\left\{ 
\begin{array}{cc}
\frac{p}{p-1}, & 1<p<\infty , \\ 
1, & p=\infty , \\ 
\infty , & p=1.%
\end{array}%
\right. 
\end{equation*}%
Since $1<p<q\leq 2$ then $2\leq q^{^{\prime }}<p^{^{\prime }}<\infty $ and if%
\begin{equation}
r>\frac{1}{2}\left( \frac{1}{q^{^{\prime }}}-\frac{1}{p^{^{\prime }}}\right)
/\left( \frac{1}{2}-\frac{1}{p^{^{\prime }}}\right) =\frac{1}{2}\left( \frac{%
1}{p}-\frac{1}{q}\right) /\left( \frac{1}{p}-\frac{1}{2}\right)   \label{pq1}
\end{equation}%
then%
\begin{equation*}
d_{n}\left( W_{q^{^{\prime }}}^{r},L_{p^{^{\prime }}}\right) \asymp n^{-r}.
\end{equation*}%
Clearly, 
\begin{equation*}
\frac{1}{2}\left( \frac{1}{p}-\frac{1}{q}\right) /\left( \frac{1}{p}-\frac{1%
}{2}\right) >\frac{1}{p}-\frac{1}{q}.
\end{equation*}%
Hence 
\begin{equation*}
n^{-r}\ll \Lambda _{n,m}\left( W_{p}^{r},L_{q}\right) \ll n^{-r+\frac{1}{p}-%
\frac{1}{q}}\text{, }0\leq m\leq n
\end{equation*}%
if (\ref{pq1}) is satisfied. This proves (\ref{fs}).

To show (\ref{ss}) we remark that if $2\leq p<q<\infty $ then \cite{GALEEV}, 
\cite{C23}, \cite{C11} 
\begin{equation*}
d_{n}\left( W_{p}^{r},L_{q}\right) \asymp n^{\frac{q}{2}\left( -r+\frac{1}{p}%
-\frac{1}{q}\right) },
\end{equation*}%
where%
\begin{equation*}
\frac{1}{p}-\frac{1}{q}<r<\frac{1}{2}\left( \frac{1}{p}-\frac{1}{q}\right)
/\left( \frac{1}{2}-\frac{1}{q}\right) .
\end{equation*}%
Consequently, by (\ref{ooo}) and (\ref{dl1}) we get 
\begin{equation*}
\Lambda _{n,m}\left( W_{p}^{r},L_{q}\right) \geq d^{n}\left(
W_{p}^{r},L_{q}\right) =d_{n}\left( W_{q^{^{\prime }}}^{r},L_{p^{^{\prime
}}}\right) 
\end{equation*}%
\begin{equation*}
\asymp n^{\frac{p^{^{\prime }}}{2}\left( -r+\frac{1}{q^{^{\prime }}}-\frac{1%
}{p^{^{\prime }}}\right) }=n^{\frac{p}{2\left( p-1\right) }\left( -r+\frac{1%
}{p}-\frac{1}{q}\right) }.
\end{equation*}%
The respective upper bounds follow from (\ref{upper}). This proves (\ref{ss}%
).

The case (\ref{is}) can be treated similarly. Namely, since 
\begin{equation*}
\lambda _{2n}\left( \Lambda U_{p},L_{q}\right) \ll \sup_{f\in
W_{p}^{r}}\left\Vert f-S_{n}\left( f\right) \right\Vert _{q}\ll \exp \left(
-\mu n^{\gamma }\right) n^{\left( 1-\gamma \right) \left( \frac{1}{p}-\frac{1%
}{q}\right) _{+}}\text{, }
\end{equation*}%
where%
\begin{equation*}
1<p<q\leq 2\text{, }\mu >0\text{, }0<\gamma <1.
\end{equation*}%
\cite{C11}, \cite{KKK}, \cite{KKK1}, \cite{KKK2}, \cite{KKK3}
and by (\ref{dl1})  
\begin{equation*}
d^{2n}\left( \Lambda U_{p},L_{q}\right) \asymp d_{2n}\left( \Lambda
U_{p^{^{\prime }}},L_{q^{^{\prime }}}\right) \asymp \exp \left( -\mu
n^{\gamma }\right) ,
\end{equation*}%
\begin{equation*}
2\leq p^{^{\prime }}<q^{^{\prime }}<\infty 
\end{equation*}%
then%
\begin{equation*}
\exp \left( -\mu n^{\gamma }\right) \ll \Lambda _{2n,2m}\left( \Lambda
U_{p},L_{q}\right) \ll \exp \left( -\mu n^{\gamma }\right) n^{\left(
1-\gamma \right) \left( \frac{1}{p}-\frac{1}{q}\right) }\text{, }0\leq m\leq
n.
\end{equation*}%
Finally, consider the case of super-high smoothness (\ref{shs}). Namely, if%
\begin{equation*}
\lambda \left( k\right) =\exp \left( \mu k^{\gamma }\right) \text{, }\mu >0%
\text{, }\gamma \geq 1.
\end{equation*}%
In this case \cite{KKK}%
\begin{equation*}
\lambda _{2n}\left( \Lambda U_{p},L_{q}\right) \ll \sup_{f\in
W_{p}^{r}}\left\Vert f-S_{n}\left( f\right) \right\Vert _{q}\ll \exp \left(
-\mu n^{\gamma }\right) \text{, }1<p,q<\infty 
\end{equation*}%
and%
\begin{equation*}
d^{2n}\left( \Lambda U_{p},L_{q}\right) \asymp d_{2n}\left( \Lambda
U_{p^{^{\prime }}},L_{q^{^{\prime }}}\right) \asymp \exp \left( -\mu
n^{\gamma }\right) \text{, }1<p,q<\infty .
\end{equation*}%
Consequently,

\begin{equation*}
\Lambda _{n,m}\left( \Lambda U_{p},L_{q}\right) \asymp \exp \left( -\mu
n^{\gamma }\right) \text{, }\mu >0\text{, }\gamma \geq 1
\end{equation*}

for any $0\leq m\leq n$.
\end{proof}

\section*{Acknowledgements}

The Author wishes to thank the organisers of the $2^{\frac{nd}{{}}}$
Gaussian Symposium, Munich, 1993, where Theorem 1 was reported by the
author. Also, I wish to thank referees and Communicating Editor for valuable
comments.

\bigskip

\end{document}